\documentclass[11pt]{amsart}
\usepackage[margin=1.15in]{geometry}
\usepackage{amsmath,amssymb,amsthm}
\usepackage{mathrsfs}
\usepackage{enumitem}
\usepackage{xcolor}
\definecolor{linkblue}{rgb}{0.05,0.20,0.60}
\usepackage[colorlinks=true,linkcolor=linkblue,citecolor=linkblue,urlcolor=linkblue]{hyperref}
\hypersetup{pdftitle={Bernstein-Sato ideals might not be linear},pdfauthor={Lei Wu}}

\theoremstyle{plain}
\newtheorem{theorem}{Theorem}[section]
\newtheorem{proposition}[theorem]{Proposition}

\theoremstyle{definition}

\theoremstyle{remark}
\newtheorem{remark}[theorem]{Remark}

\theoremstyle{plain}
\newtheorem{thmx}{Theorem}
\renewcommand{\thethmx}{\Alph{thmx}}

\newcommand{\sD}{\mathscr{D}}
\newcommand{\cA}{\mathcal{A}}
\newcommand{\cB}{\mathcal{B}}
\newcommand{\cH}{\mathcal{H}}
\newcommand{\cK}{\mathcal{K}}
\newcommand{\cL}{\mathcal{L}}
\newcommand{\cM}{\mathcal{M}}
\newcommand{\cN}{\mathcal{N}}
\newcommand{\cO}{\mathcal{O}}
\newcommand{\cQ}{\mathcal{Q}}
\newcommand{\cP}{\mathcal{P}}
\newcommand{\cS}{\mathcal{S}}
\newcommand{\cU}{\mathcal{U}}
\newcommand{\cV}{\mathcal{V}}
\newcommand{\cC}{\mathcal{C}}
\newcommand{\bone}{\mathbf{1}}
\newcommand{\C}{\mathbb{C}}
\newcommand{\Q}{\mathbb{Q}}
\newcommand{\Z}{\mathbb{Z}}
\newcommand{\N}{\mathbb{N}}
\newcommand{\bA}{\mathbb{A}}
\newcommand{\bP}{\mathbb{P}}
\newcommand{\fm}{\mathfrak{m}}
\newcommand{\fp}{\mathfrak{p}}
\newcommand{\Exp}{\operatorname{Exp}}
\newcommand{\Ann}{\operatorname{Ann}}
\newcommand{\Frac}{\operatorname{Frac}}
\newcommand{\Hom}{\operatorname{Hom}}
\newcommand{\DR}{\operatorname{DR}}
\newcommand{\Res}{\operatorname{Res}}
\newcommand{\supp}{\operatorname{supp}}
\newcommand{\Ch}{\operatorname{Ch}}
\newcommand{\CC}{\operatorname{CC}}
\newcommand{\gr}{\operatorname{gr}}
\newcommand{\Spec}{\operatorname{Spec}}
\newcommand{\an}{\mathrm{an}}
\newcommand{\rel}{\mathrm{rel}}
\newcommand{\wt}{\widetilde}
\newcommand{\Drel}{\sD_{X^{\an}\times T/T}}
\newcommand{\dlog}{d\!\log}
\newcommand{\into}{\hookrightarrow}
\newcommand{\onto}{\twoheadrightarrow}

\title[Bernstein--Sato ideals might not be linear]{Bernstein--Sato ideals might not be linear}
\author{Lei Wu}
\address{Lei Wu, School of Mathematical Sciences, Zhejiang University, Hangzhou, 310058, P.R. China}
\email{leiwu23@zju.edu.cn}
\subjclass[2020]{32S40; 14F10; 32C38; 13N10}

\begin{document}

\begin{abstract}
Let $F=(f_1,\dots,f_r)$ be a family of nonzero polynomials on $\C^n$. Budur \cite[Conjecture 1.1]{Bud15} conjectured that $B_F$, the Bernstein--Sato ideal of $F$, is generated by products of linear polynomials with rational coefficients. With the aid of AI, we construct two counterexamples to Budur's conjecture. The examples indicate that main results in \cite{Wu26} provide the \emph{optimal} linearity properties that the zero locus of Bernstein-Sato ideals satisfies in general. 
\end{abstract}

\maketitle
\section{Introduction}

Let $F=(f_1,\dots,f_r)\colon X=\C^n\to\bA^r_\C$ be a non-constant regular map. Write $s=(s_1,\dots,s_r)$ for a tuple of independent variables, and put $f=\prod_{i=1}^r f_i$ and $F^{s}=\prod_{i=1}^r f_i^{s_i}$.
The \emph{Bernstein--Sato ideal} of $F$ is the ideal
\[
B_F=B_F^{\bone}=\bigl\{\,b(s)\in\C[s_1,\dots,s_r]\;\big|\; b(s)F^{s}=P(s)F^{s+\bone}\ \text{for some }P(s)\in\sD_X[s_1,\dots,s_r]\,\bigr\},
\]
where $\bone=(1,\dots,1)\in \N^r$ and $\sD_X$ is the ring of (algebraic) differential operators on $X$. More generally, for a monoid ideal $K\subseteq \N^r$ one defines the Bernstein--Sato ideal $B^K_F$ of $F$ along $K$ (see \cite[\S 4.1]{Bud15} and \cite[\S 6]{Wu26}).
Sabbah \cite{Sab87a,Sab87b} proved that $B_F$ contains a nonzero product of linear polynomials of the form $a_1s_1+\dots+a_rs_r+\alpha$ with $a\in\N^r$ and $\alpha\in\Q$; see also \cite{Gyo93,Bah05}. For $r=1$, the ideal $B_F$ is generated by the classical Bernstein--Sato polynomial, whose roots are negative rational numbers by Kashiwara \cite{Kas76}. We denote by $Z(B_F)\subseteq\C^r$ the zero locus of $B_F$, always considered with its reduced structure, and by
\[
\Exp\colon\C^r\to(\C^*)^r,\qquad (\alpha_1,\dots,\alpha_r)\mapsto(e^{2\pi i\alpha_1},\dots,e^{2\pi i\alpha_r}),
\]
the universal covering map. Budur \cite{Bud15} related $\Exp(Z(B_F))$ to cohomology support loci of rank-one local systems and raised the question of how to interpret $Z(B_F)$ topologically. This question was addressed in \cite{BvdVWZ21a,BvdVWZ21b} and answered completely in \cite[Corollary B]{Wu26} by means of relative and logarithmic $\sD$-modules; in particular, $\Exp(Z(B_F))\subseteq(\C^*)^r$ is a finite union of torsion-translated subtori. Note that the preimage of $\Exp(Z(B_F))$ is a union of linear subvarieties of $\C^r$ defined over $\Q$.

For $B_F$ itself, based on the results of Sabbah and Kashiwara mentioned above and on computational evidence, Budur \cite[Conjecture 1.1]{Bud15} conjectured that $B_F$ is generated by products of linear polynomials of the form $c_1s_1+\dots+c_rs_r+c_0$ with $c_1,\dots,c_r\in\N$ and $c_0\in\Q_{>0}$ (see \cite[Remark 6.8(1)]{BSZ25} for the analogous statement for $B_F^K$). If this were the case, then every irreducible component of $Z(B_F)$ would be cut out by affine linear equations with rational coefficients.

With the help of AI, we found the following two counterexamples.

\begin{thmx}\label{thm:main}
Let $X=\C^2$ with coordinates $x,y$ and consider the ordered tuple
\[
F_8=\bigl(x,\ y,\ x+y,\ x+2y,\ x+3y,\ x+4y,\ x+\sqrt2\,y+y^2,\ x+5y+y^2\bigr).
\]
Put
\[
S=\sum_{i=1}^8 s_i,\qquad L=60(s_7+1)+(49\sqrt2-70)(s_8+1).
\]
Then the six-dimensional affine subspace
\[
C_8=\{s\in\C^8\mid S+12=0,\ L=0\}
\]
is an irreducible component of $Z(B_{F_8})$ whose direction is not defined over $\Q$.
\end{thmx}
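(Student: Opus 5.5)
The plan is to localize at the origin and reduce the membership question to a finite-dimensional linear-algebra computation along the exceptional divisor of the blow-up. All eight curves pass through $0$ and are smooth there, and their tangent lines $x=0$, $y=0$, $x+y=0$, $x+2y=0$, $x+3y=0$, $x+4y=0$, $x+\sqrt2\,y=0$, $x+5y=0$ are pairwise distinct. Away from $0$ the divisor $f=0$ has only normal crossings, so only the factors $s_i+1$ occur there. Hence every component of $Z(B_{F_8})$ of interest is produced at the origin, and we may replace $B_{F_8}$ by the local Bernstein--Sato ideal at $0$.

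\textbf{Step 1: identify $S+12=0$ as a candidate hyperplane.} Blowing up $0$ gives a log resolution: the exceptional divisor $E\cong\bP^1$ carries multiplicity vector $(1,\dots,1)$ and $8$ marked points. Sabbah's result together with the description of $\Exp(Z(B_{F_8}))$ in \cite[Corollary B]{Wu26} then confine the codimension-$\ge1$ contributions at $0$ to the hyperplanes $S+k=0$ with $k\ge 2$. I will treat $H=\{S+12=0\}$ as one of these candidates, i.e.\ the shift $k=2+10$ coming from degree-$10$ data on $E$.

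\textbf{Step 2: compute the support of $\cN=\sD_X[s]F^s/\sD_X[s]F^{s+\bone}$ inside $H$.} I work in the $V$-filtration (weight grading) along $E$, or equivalently order by order in the homogeneous degree attached to the blow-up. The obstruction to inverting $S+12$ lives in a single graded piece of weighted degree $10$. After writing out the action of $\sD_X[s]$ on the degree-$10$ part of $F^s$ modulo $\sD_X[s]F^{s+\bone}$, this piece is a finite-dimensional $\C[s]/(S+12)$-module given by an explicit square matrix. The six linear forms contribute only their slopes $0,1,2,3,4$ and $\infty$. The two conics contribute their slopes $\sqrt2$ and $5$, and in addition their common second-order term $y^2$, which enters the degree-$10$ piece with the weights $s_7+1$ and $s_8+1$. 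The goal is to show that the determinant of this matrix, restricted to $H$, equals a nonzero constant times $L$, up to factors that are units near the generic point of $C_8$. I expect this to be the main obstacle. My first attempt would be to exploit the $\C^*$-action, which is broken only by the $y^2$ terms: expand to first order in the $y^2$ perturbation, so that the determinant becomes linear in $(s_7+1,s_8+1)$, with coefficients given by Vandermonde-type residue sums over the eight slopes. The numbers $60$ and $49\sqrt2-70$ should then appear as such sums evaluated at the slopes $\sqrt2$ and $5$. Finally I would verify that no higher-order terms survive in this degree.

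\textbf{Step 3: conclude.} Step 2 shows that $C_8\subseteq Z(B_{F_8})$ and that the generic point of $H$ does not lie in $Z(B_{F_8})$. To show that $C_8$ is a full irreducible component, I must also check that it is not contained in a larger component. Any component containing $C_8$ would be contained in $Z(B_{F_8})\cap\{\text{candidate locus}\}$. By Step 1 the only codimension-one candidate containing $C_8$ is $H$, and Step 2 excludes $H$. Hence $C_8$ is maximal, i.e.\ an irreducible component. For the last claim, the direction of $C_8$ is cut out by $\sum_i s_i=0$ and $60s_7+(49\sqrt2-70)s_8=0$. The ratio $(49\sqrt2-70)/60$ is irrational, so no nonzero rational multiple of the second equation modulo the first is defined over $\Q$. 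Thus the direction of $C_8$ is not defined over $\Q$.
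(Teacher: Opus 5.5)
Your skeleton is the paper's. You show $C_8\subseteq Z(B_{F_8})$ and $H_{12}=\{S+12=0\}\not\subseteq Z(B_{F_8})$. You then observe that a component strictly containing $C_8$ has codimension one, so it is a rational hyperplane (Sabbah's theorem alone gives this), and $H_{12}$ is the only such hyperplane containing $C_8$. Step 3 and the irrationality of the direction are fine. Your picture of a square matrix that drops rank along $L=0$ inside $H_{12}$ is also right in spirit. It is the $6\times6$ matrix expressing the classes of $P,zP,z^2P,dP,zdP,e_2P$ in the basis $[z^k\,dz]$ of the $6$-dimensional twisted de Rham cohomology $\cH_s$ of $\bP^1$ minus eight points.

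The gap is Step 2. You leave it as an expectation, and neither inference you draw from it is justified.

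\begin{itemize}
\item To get $C_8\subseteq Z(B_{F_8})$ you must produce a nonzero quotient of $\cQ_{F_8}\otimes_R K_8$, and you never say how a degenerate matrix yields one. The paper builds it from the residue $\rho_0(m)=[\Res_t(t\hat m)\,dz]$ along the exceptional divisor and the $\sD_X[s]$-linear moment map $\Psi$ to $\Delta_K$, composed with $\cH_s\to\cH_s/V$. This map kills $fF^s$ and is onto because $\dim_K V=5<6$ at the generic point of $C_8$.
\item More seriously, a nonzero determinant at the generic point of $H_{12}$ only shows that this construction gives no quotient there. It does not show $\cQ_{F_8}\otimes K_H=0$. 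Your sentence ``the obstruction to inverting $S+12$ lives in a single graded piece'' is exactly what must be proved.
\end{itemize}

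The paper proves it at $\beta=(-\tfrac32,\dots,-\tfrac32)\in H_{12}\setminus C_8$ (Proposition~\ref{prop:generation}):
\begin{itemize}
\item Riemann--Hilbert, Milnor's conic structure and an isotopy of the link to eight Hopf circles give $\dim\Hom_{\sD}(\cU_\beta,\Delta_0)=6$. So every morphism to $\Delta_0$ is a residue map.
\item The six classes form a basis, so $\cU_\beta/\sD(fe_\beta)$, which is supported at the origin, has no nonzero map to $\Delta_0$. It therefore vanishes by Kashiwara's equivalence.
\item Proposition~\ref{prop:relgen} then turns $\sD(fe_\beta)=\cU_\beta$ into $\beta\notin Z(B_{F_8,0})$.
\end{itemize}
Nothing in your plan plays the role of this topological completeness count.

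Your proposed way of computing the determinant would also fail. Replace $y^2$ by $\epsilon y^2$ and substitute $(x,y)=(X/\epsilon,Y/\epsilon)$. Each entry of the tuple becomes a constant times the $\epsilon=1$ entry, so $B_F$ is the same for every $\epsilon\neq0$; the $y^2$ coefficient is not a genuine deformation parameter. In the $\epsilon$-independent basis $[z^k\,dz]$, the six columns have $\epsilon$-orders $0,0,0,1,1,2$. Hence the determinant is $\epsilon^4$ times its value at $\epsilon=1$, and any first-order truncation vanishes identically. In particular $\rho_0(fF^s)=[e_2(z)P\,dz]$ comes from the $t^2$-coefficient of $(1+\tfrac{t}{z+b})^{\lambda_7}(1+\tfrac{t}{z+c})^{\lambda_8}$. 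It is quadratic in $\lambda_7,\lambda_8$ and indispensable, so ``no higher-order terms survive'' is false. In the paper, $L$ comes from matching the double-pole coefficients of $Ce_2$ with those of $\nabla_\lambda h$, where $h=a_0P_5/((z+b)(z+c))$, and then cancelling $\lambda_7-1$ and $\lambda_8-1$. Its coefficients are the cross-paired values $60=-P_5(-5)/2$ and $49\sqrt2-70=-P_5(-\sqrt2)/2$ of $P_5(z)=z(z+1)(z+2)(z+3)(z+4)$.
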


Consequently, $C_8\subseteq \C^8$ cannot be cut out by affine linear equations with coefficients in $\Q$, which disproves Budur's conjecture above (as well as its geometric analogue \cite[Conjecture E]{Wu26}). 
See Remark~\ref{rmk:mainrmk} for how the coefficients in Theorem~\ref{thm:main} are chosen to ensure the irrationality of $C_8$. 

Another example, also found with the assistance of AI, was constructed in \cite[Theorem 1.5]{GWX26}: for a line arrangement of degree $4$ defined by linear forms with integral coefficients and a monoid ideal $K\subseteq\N^4$ generated by two vectors, the Bernstein--Sato ideal $B^K_F$ has an associated minimal prime defining an irreducible nonlinear quadric surface in $\C^4$. Hence $B_{F_8}$ and the example of \cite{GWX26} show that, in general, $B_F^K$ need not be $\Q$-linear. On the other hand,
\cite[Theorem 1.5.1]{BvdVWZ21a} and \cite[Theorem 1.1.1]{BvdVWZ21b} for principal monoid ideals $K$, and more generally \cite[Theorems A, C and D]{Wu26} for arbitrary $K$, thus provide the \emph{optimal} linearity properties that the zero locus of $B_F^K$ satisfies in general.

The second example is in $X=\C^3$ with coordinates $x,y,z$. Choose ten complex numbers
\[
\xi_1,\ldots,\xi_5,c_1,\ldots,c_5
\]
algebraically independent over $\overline{\mathbb Q}$. Set
\[
A_i(x)=\prod_{j\ne i}\frac{x-\xi_j}{\xi_i-\xi_j},\qquad
G_i=1+A_i(x)(z+c_i),\qquad
H_i=(x-\xi_i)^5+y^5+G_i(x-\xi_i)^2y^3.
\]

We use the following exponent vectors:
\begin{center}
\begin{tabular}{c|c|c}
\hline
$i$ & $a_i$ & $b_i$ \\
\hline
1 & $(1,1,3)$ & $(0,0,3)$ \\
2 & $(1,1,3)$ & $(0,1,2)$ \\
3 & $(1,2,2)$ & $(0,0,3)$ \\
4 & $(1,2,2)$ & $(0,1,2)$ \\
5 & $(1,3,1)$ & $(0,3,0)$ \\
\hline
\end{tabular}
\end{center}
For $j=1,2,3$, define
\begin{equation}\label{eq:tuple}
f_j=\prod_{i=1}^5G_i^{a_{ij}}H_i^{b_{ij}},
\end{equation}
where $a_{ij}$ and $b_{ij}$ denote the $j$-th coordinates of $a_i$ and $b_i$, and take $F=(f_1,f_2,f_3)$.
\begin{thmx}\label{thm:main2}
The Bernstein--Sato ideal $B_F$ has no
generating set consisting of products of linear polynomials.
\end{thmx}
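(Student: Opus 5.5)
Since $B_F$ is an ideal, a generating set made of products of linear polynomials would force every irreducible component of $Z(B_F)$ to be an affine linear subspace of $\C^3$. Indeed, $Z(B_F)$ would then be an intersection of finite unions of hyperplanes, hence a finite union of affine subspaces. So I will exhibit an irreducible component of $Z(B_F)$ that is not linear, for example a codimension-one component cut out by an irreducible nonlinear polynomial. Alternatively, I could show that the components cannot all be defined by linear equations over a common field; but algebraic independence of $\xi,c$ suggests the intended mechanism is genuine nonlinearity rather than irrationality.

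\textbf{Step 1: localize.} The $G_i$ and $H_i$ are constructed so that the components of $Z(B_F)$ coming from $x$ near $\xi_i$ are governed by a local germ. Near a point with $x=\xi_i$, one has $A_j(\xi_i)=\delta_{ij}$. Hence $G_j$ is a unit for $j\ne i$ at generic points, while $G_i=1+(z+c_i)$ near $x=\xi_i$. Since $B_F$ is the intersection of the local Bernstein--Sato ideals, I get $Z(B_F)=\bigcup_p Z(B_{F,p})$.

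Near the curve $x=\xi_i$, $y=0$, the tuple $F$ is locally, up to units, $(H_i^{b_{i1}},H_i^{b_{i2}},H_i^{b_{i3}})$, multiplied by units $G_i^{a_{ij}}$. The germ $H_i=u^5+y^5+G\,u^2y^3$ with $u=x-\xi_i$ is a $\mu$-constant-type family of plane curve singularities, parametrized by $G=1+z+c_i$. The parameter $z$ acts as a deformation, and $G$ vanishes exactly where $G_i$ does. At points where $G_i=0$ the germ degenerates to $u^5+y^5$, whose Bernstein--Sato roots differ from the generic ones. So along $\{G_i=0,H_i=0\}$ the local Bernstein--Sato ideal involves both the $G_i$-exponent $\sum_j a_{ij}s_j$ and the $H_i$-exponent $\sum_j b_{ij}s_j$.

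\textbf{Step 2: compute the local contribution.} I will use the results of \cite{Wu26} (Theorems A, C, D), which describe $Z(B_F^K)$ via relative/log $\sD$-modules and characteristic cycles. With them, I will show that the local contribution at such a point is a product of factors in the two linear forms $\ell_i=a_i\cdot s$ and $m_i=b_i\cdot s$. The key claim is that the local zero locus contains a component that is not a union of hyperplanes. The component comes from the interaction of the unit-parameter weight (non-quasihomogeneity of $u^5+y^5+Gu^2y^3$ when $G\neq 0$) with the multiplicity of $G_i$. I expect this to give a relation of the form $\ell_i\,m_i$-type quadratic, or an ideal such as $(\ell_i+\alpha,\,m_i+\beta)\cap(\dots)$.

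\textbf{Step 3: globalize.} The five pairs $(a_i,b_i)$ are chosen so that the resulting local pieces cannot all combine into linear components. For instance, several codimension-two loci $\{\ell_i+\alpha=0,\ m_i+\beta=0\}$ in $\C^3$ may be forced to lie on a common component defined by a nonlinear generator. Alternatively, the embedded/primary structure in $B_F$ (not just $Z$) may prevent generation by products of linear forms. If only the reduced locus is linear, I would instead argue at the level of the ideal. The idea is to show that some minimal generator, modulo the others, has an irreducible nonlinear factor. To test this, I would localize $B_F$ at a generic point of a suitable line and compare it with the ideal generated by all products of linear forms that $B_F$ contains. The generic choice of $\xi,c$ ensures there are no accidental coincidences between different $i$.

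\textbf{Main obstacle.} The main obstacle is the explicit computation in Step~2. It requires the local Bernstein--Sato ideal of a non-quasihomogeneous family with a varying unit parameter, including its scheme structure. This is well beyond what the linearity theorems provide, since those only control $\Exp(Z)$. I would attack it first through the deformation $z\mapsto G$ and Sabbah-type $V$-filtration arguments. Failing that, I would use a Gröbner-basis computation in $\sD[s]$ with the $\xi,c$ specialized, and then justify the specialization by a constructibility argument.
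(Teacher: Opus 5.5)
There is a genuine gap. Your main plan is to exhibit an irreducible component of $Z(B_F)$ that is not an affine subspace, or is not defined over $\Q$. In this example no such component exists.

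\begin{itemize}
\item Off the curves $L_i=\{x=\xi_i,\ y=0\}$ the divisor has simple normal crossings, so it contributes only rational hyperplanes $e\cdot s+k=0$.
\item Along $L_i$, the local ideal is the pullback of $B_g^{(5,3)}$, where $g=(Z,X^5+Y^5+ZX^2Y^3)$ is the Bahloul--Oaku tuple, under $U=a_i\cdot s$, $V=b_i\cdot s$. In the $(U,V)$-plane these zero loci consist of rational lines and finitely many rational points.
\end{itemize}

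Hence $Z(B_F)$ is a finite union of rational planes and lines, and the obstruction must be purely ideal-theoretic. Your Step 3 fallback is therefore the only viable route, but it lacks the key mechanism.

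Your concrete test cannot succeed. At a generic point of one of the lines, the localized ideal is just $(P_i)$, which is generated by two linear forms. Likewise, the quadric you hint at is not a component of $Z(B_F)$. It is an element of the ideal.

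The missing idea is to localize at the single point where the lines meet.

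\begin{itemize}
\item \emph{Concurrence.} Since $\sum_j a_{ij}=5$ and $\sum_j b_{ij}=3$, every line $\{a_i\cdot s=-6,\ b_i\cdot s=-18/5\}$ passes through $p=(-6/5,-6/5,-6/5)$. These lines come from the isolated point $(-2,-8/5)$ of $Z(B_g)$, moved to $(-6,-18/5)$ for the shift $(5,3)$.
\item \emph{Localized ideal.} One shows $(B_F)_\fm=(\bigcap_{i=1}^5P_i)_\fm$. The SNC factors, the factor $h$, and the points of $L_i$ other than $q_i$ all become units after localizing at $p$.
\item \emph{Products of linear forms lie in $\fm^3$.} No three of the five lines are coplanar. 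A product of linear forms in $B_F$ needs, for each $i$, a factor vanishing on the $i$-th line. Each such factor cuts out a plane through $p$ containing at most two of the lines, so the product has at least three factors through $p$.
\item \emph{A quadric of order $2$.} Five concurrent lines lie on a quadric cone, here $Q=(u+v+3w)(u+2v+2w)-w(v+2w)$. It lies in $(B_F)_\fm$ and has order $2$, a contradiction.
\end{itemize}

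No new $V$-filtration or Gr\"obner computation is needed. The local input is the published computation of $B_g$ plus an elementary shift argument.

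Also, your Step 2 mechanism is mistaken. For each fixed $G$, $u^5+y^5+Gu^2y^3$ is homogeneous in $(u,y)$, so non-quasihomogeneity is not what drives the example.
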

Theorem~\ref{thm:main2} gives a second counterexample to Budur's conjecture. The construction relies on a computational example of Bahloul and Oaku \cite{BaOK}.

\subsection*{Acknowledgement} Both examples were constructed by GPT-6-Astra. The resulting statements and proofs have been checked, simplified and/or rephrased by the author, who is solely
responsible for the correctness of the results and for the final form of the text.

\section{A Nakayama-type lemma}\label{sec:support}
Let $X=\C^n$ with coordinates $x_1,\dots,x_n$, and let $F=(f_1,\dots,f_r)$ be nonzero polynomials on $X$. We consider the relative meromorphic connection $\cS_F=\C[x_1,\dots,x_n,1/f,s]\,F^s$ and its $\sD_X[s]$-submodules
\[\cN=\sD_X[s]F^s,\qquad \cM=\sD_X[s]fF^s,\]
together with the quotient module $\cQ_F={\cN}/{\cM}$. All of these modules are relative holonomic over $R=\C[s]$ (cf.~\cite{BvdVWZ21a}), and by definition $B_F=\Ann_R(\cQ_F)$.
For $\beta=(\beta_1,\dots,\beta_r)\in \C^r\simeq \Spec R$, we similarly have the meromorphic connection and its submodule
\[\cS_\beta=\C[x_1,\dots,x_n,1/f]\,F^\beta,\qquad \cM_\beta=\sD_X\cdot(fF^\beta)\subseteq \cS_\beta.\]
Since $\cS_F$ is flat over $R$, we have $\cS_\beta\simeq \cS_F\otimes_R \C_\beta$,
where $\C_\beta$ is the residue field of $\beta\in \Spec R$. For a point $x\in X$, we write $B_{F,x}$ for the local Bernstein--Sato ideal of $F$ at $x$.

\begin{proposition}\label{prop:relgen}
With notation as above, if $\cM_\beta=\cS_\beta$ for some $\beta\in \C^r$, then $\beta\notin Z(B_F)$. Similarly, if the germs at a point $x\in X$ satisfy $\cM_{\beta,x}=\cS_{\beta,x}$, then $\beta\notin Z(B_{F,x})$.
\end{proposition}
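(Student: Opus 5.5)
The plan is to reduce the statement to a fiberwise vanishing criterion for relative holonomic modules. The input I would invoke from \cite{BvdVWZ21a} is the following. For a relative holonomic $\sD_X[s]$-module $\cK$, the support $\operatorname{Supp}_R\cK$ is Zariski closed and equals $Z(\Ann_R\cK)$. Moreover, $\beta\in\operatorname{Supp}_R\cK$ if and only if $\cK\otimes_R\C_\beta\neq0$. This is the Nakayama-type statement that replaces finiteness over $R$, which fails since $\cK$ is not finitely generated over $R$. It comes from the fact that $\gr$ of a good filtration is finite over $\cO_{T^*X}\otimes R$ and relative holonomicity controls the fibres. Since $B_F=\Ann_R(\cQ_F)$, it then suffices to show $\cQ_F\otimes_R\C_\beta=\cN/(\cM+\fm_\beta\cN)=0$, that is, $F^s\in\cM+\fm_\beta\cN$.

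Next I would exploit the hypothesis. $\cS_F$ is $R$-flat with $\cS_F\otimes_R\C_\beta=\cS_\beta$, and the image of $\cM$ under specialization is $\cM_\beta$. So $\cM_\beta=\cS_\beta$ means $(\cS_F/\cM)\otimes_R\C_\beta=0$, i.e.\ $\cS_F=\cM+\fm_\beta\cS_F$. Concretely, write $f^{-k}F^\beta=P_k\,fF^\beta$ with $P_k\in\sD_X$. Then $f^{-k}F^s-P_kfF^s$ vanishes at $s=\beta$, hence lies in $\sum_i(s_i-\beta_i)\cS_F$.

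To pass from $\cS_F$ (not finitely generated) to the finitely generated modules $\cN_k=\sD_X[s]f^{-k}F^s$, I would use $\cM\subseteq\cN=\cN_0\subseteq\cN_1\subseteq\cdots$ with $\bigcup_k\cN_k=\cS_F$. Each $\cN_k/\cM$ is relative holonomic, since $\cN_k\simeq\cN$ after the shift $s\mapsto s-k\bone$. The support criterion applied to $\cN_k/\cM$ then says $\beta\notin\operatorname{Supp}_R(\cN_k/\cM)$ iff $\cN_k=\cM+\fm_\beta\cN_k$. Because $\cN/\cM$ is a submodule of $\cN_k/\cM$, localizing at $\fm_\beta$ gives $\beta\notin\operatorname{Supp}(\cN/\cM)$, hence $\beta\notin Z(B_F)$.

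\textbf{Main obstacle.} The hypothesis only yields $\cN_k\subseteq\cM+\fm_\beta\cN_{k'}$ for some larger $k'$, not with $k'=k$. I would handle this using that $\operatorname{Supp}_R(\cS_F/\cM)=\bigcup_k\operatorname{Supp}_R(\cN_k/\cM)$. By Noetherianity of the closed subsets of $\Spec R$, this increasing union stabilizes at some $k_0$. If $\beta$ lay in $\operatorname{Supp}(\cN_{k_0}/\cM)$, then $(\cS_F/\cM)_{\fm_\beta}\neq0$. But $(\cS_F/\cM)\otimes\C_\beta=0$, and $\cS_F/\cM$ is a direct limit of relative holonomic modules supported in the fixed closed set. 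So I would apply the fibre criterion to $\cN_{k_0}/\cM$: show $\cN_{k_0}=\cM+\fm_\beta\cN_{k_0}$ by checking that the relations from $\cN_{k'}$ collapse modulo $\cN_{k_0}$. This step is where I expect the real work to lie.

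The local statement follows by the same argument applied to germs at $x$, using the local version of the support criterion and $B_{F,x}=\Ann_R(\cQ_{F,x})$.
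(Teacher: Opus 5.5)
Your reduction matches the paper's. It suffices to find some $k$ with $(\cN_k/\cM)\otimes_R\C_\beta=0$, where $\cN_k=\sD_X[s]f^{-k}F^s$. One then applies the Nakayama-type criterion for relative holonomic modules (the paper uses \cite[Theorem E]{vdV21}) and passes to the submodule $\cQ_F=\cN/\cM$. But the step you flag as the main obstacle is the heart of the proof, and your proposed way around it fails.

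\textbf{The stabilization argument does not work.} Noetherianity of $\Spec R$ is the \emph{descending} chain condition on closed sets, so an increasing union of closed sets need not stabilize. Here it does not. For $1\le j\le k$, the module $\cN_j/\cN_{j-1}$ is a subquotient of $\cN_k/\cM$ and is an $s$-shift of $\cQ_F$, so $\operatorname{Supp}_R(\cN_k/\cM)\supseteq Z(B_F)+j\bone$. Already for $F=(x)$ on $\C^1$ one gets $\operatorname{Supp}_R(\cN_k/\cM)=\{-1,0,1,\dots,k-1\}$.

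Even granting stabilization, $(\cS_F/\cM)\otimes_R\C_\beta=0$ does not contradict $(\cS_F/\cM)_{\fm_\beta}\neq0$, because $\cS_F/\cM$ is not finitely generated. That is exactly the failure of Nakayama you set out to avoid. So your step ``show $\cN_{k_0}=\cM+\fm_\beta\cN_{k_0}$ by checking that the relations from $\cN_{k'}$ collapse'' is left unproved, and it is not formal.

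\textbf{The missing ingredient} is the comparison result the paper invokes, \cite[Theorem 3.12]{Wu20}. It says that for $k\gg0$ the natural map $\cN_k\otimes_R\C_\beta\to\cS_F\otimes_R\C_\beta=\cS_\beta$ is an isomorphism. The nontrivial part is injectivity, i.e.\ $\cN_k\cap\fm_\beta\cS_F=\fm_\beta\cN_k$.

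With this the collapse is one line. Your observation $\cS_F=\cM+\fm_\beta\cS_F$ is correct. For $n\in\cN_k$, write $n=m+e$ with $m\in\cM\subseteq\cN_k$ and $e\in\fm_\beta\cS_F$. Then $e=n-m\in\cN_k\cap\fm_\beta\cS_F=\fm_\beta\cN_k$. Hence $\cN_k=\cM+\fm_\beta\cN_k$, i.e.\ $\cP\otimes_R\C_\beta=0$ for $\cP=\cN_k/\cM$.

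From there your argument coincides with the paper's. You apply the fibre criterion to $\cP$, then use $\operatorname{Supp}_R\cQ_F\subseteq\operatorname{Supp}_R\cP$ and $\operatorname{Supp}_R\cQ_F=Z(B_F)$ for relative holonomic modules \cite[Lemmas 3.2.2 and 3.4.1]{BvdVWZ21a}. Without this Artin--Rees-type control of $\cN_k\cap\fm_\beta\cS_F$, your plan does not close.
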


\begin{proof}
By \cite[Theorem 3.12]{Wu20}, we have 
\[\cS_\beta\simeq \sD_X[s]f^{-k}F^s\otimes_R \C_\beta\]
for some integer $k\gg 0$. Consider the short exact sequence
\[0\to \cM\to \sD_X[s]f^{-k}F^s\to \cP\to0,\]
where $\cP$ denotes the quotient module. Tensoring with $\C_\beta$ gives an exact sequence
\[\cM\otimes_R\C_\beta\to \cS_\beta\to \cP\otimes_R\C_\beta\to 0.\]
Since the image of $\cM\otimes_R\C_\beta$ in $\cS_\beta$ is $\cM_\beta=\cS_\beta$, we get $\cP\otimes_R\C_\beta=0$. By \cite[Theorem E]{vdV21}, this implies
\[
\beta\notin Z\big(\Ann_R(\cP)\big).
\]
By \cite[Lemma 3.2.2.(2) and Lemma 3.4.1]{BvdVWZ21a}, the short exact sequence
\[0\to \cQ_F\to \cP\to \sD_X[s]f^{-k}F^s/\cN\to0\]
thus gives
\[\beta\notin Z(B_F)=Z\big(\Ann_R(\cQ_F)\big).\]
The statement for germs is proved in the same way.
\end{proof}

\section{Proof of Theorem~\ref{thm:main}}\label{sec:residues}

We first locate the irrational component by constructing, via residues, a nonzero quotient of the module $\cQ_{F_8}$ at the generic point of $C_8$, where $F_8$ is the tuple of Theorem~\ref{thm:main}. Throughout, $b=\sqrt2$ and $c=5$. For a field $K$ containing $\C$, we write
\[
\Delta_K=K[\partial_x,\partial_y]\,\delta_0\quad\text{and}\quad\Delta_0=\C[\partial_x,\partial_y]\,\delta_0,\qquad\text{where } x\delta_0=y\delta_0=0.
\]

\begin{proposition}\label{prop:sixplane}
Let $X=\C^2$ with coordinates $x,y$, and let
\[
F_8=\bigl(x,\ y,\ x+y,\ x+2y,\ x+3y,\ x+4y,\ x+by+y^2,\ x+cy+y^2\bigr),\qquad b=\sqrt2,\quad c=5 .
\]
Let
\[
S_8=\sum_{i=1}^8s_i,\quad L_8=60(s_7+1)+(49\sqrt2-70)(s_8+1),\quad \fp_8=(S_8+12,L_8)\subseteq R_8=\C[s_1,\dots,s_8].
\]
Then $\fp_8$ is a prime ideal, and, writing $K_8=\Frac(R_8/\fp_8)$, there is a surjective $\sD_X\otimes_\C K_8$-linear map
\[
\cQ_{F_8}\otimes_{R_8}K_8\onto\Delta_{K_8}.
\]
Consequently,
\[
B_{F_8}\subseteq\fp_8\qquad\text{and}\qquad C_8=V(\fp_8)\subseteq Z(B_{F_8}).
\]
The affine subspace $C_8$ has dimension six, its direction is not defined over $\Q$, and the only rational affine hyperplane containing $C_8$ is $H_{12}=V(S_8+12)$.
\end{proposition}


\begin{proof}
To simplify notation, we abbreviate $R=R_8$, 
$F=F_8$, $e=F_8^s$, and $\cQ=\cQ_{F_8}$,
and we put
\[
\lambda_i=s_i+1,\qquad \gamma=49\sqrt2-70,\qquad S=\sum_{i=1}^8s_i,\qquad L=60\lambda_7+\gamma\lambda_8 .
\]
We work over the function field
\[
K=\Frac\bigl(R/(S+12,L)\bigr),
\]
and, for comparison with the generic point of the rational hyperplane $H_{12}$, also over
\[
K_H=\Frac\bigl(R/(S+12)\bigr).
\]

Let
\[
P_5(z)=z(z+1)(z+2)(z+3)(z+4),\qquad P(z)=P_5(z)(z+b)(z+c).
\]
The seven zeros of $P$ are $0,-1,-2,-3,-4,-b,-c$. We index them by
\[
I=\{1,3,4,5,6,7,8\},\qquad (a_1,a_3,a_4,a_5,a_6,a_7,a_8)=(0,1,2,3,4,b,c),
\]
so that the puncture $-a_i$ corresponds to the $i$-th entry of $F$; these punctures are pairwise distinct. Over either $K$ or $K_H$, set $\Omega=K[z,1/P]$, with the evident replacement of $K$ by $K_H$ when needed. We also put $N=\cN\otimes_R K$.
Use the (partial) blow-up coordinates
\[
y=t,\qquad x=tz ,
\]
and write
\[
\Phi_s(z)=z^{s_1}(z+1)^{s_3}(z+2)^{s_4}(z+3)^{s_5}(z+4)^{s_6}(z+b)^{s_7}(z+c)^{s_8}.
\]
For $d\in \Z$ and $a=a(x,y,s)\in R[x,y]$, consider $m=af^de\in\cS_F$; every element of $\cN$ is of this form.
Using the (generalized) binomial expansions of $(1+\frac{t}{z+b})^{s_7+d}$ and $(1+\frac{t}{z+c})^{s_8+d}$, the pullback of $m$ to the coordinates $(t,z)$ is
\[\hat m =a(tz,t,s)\,t^{-12+8d}P^{d}\Phi_s(z)\Bigl(1+\frac{t}{z+b}\Bigr)^{s_7+d}\Bigl(1+\frac{t}{z+c}\Bigr)^{s_8+d}\in \Omega[[t]][1/t]\cdot \Phi_s(z),\]
that is,
\[\hat m=\sum_{k=-12+8d+l}^\infty\phi_k t^k,\qquad \phi_k\in \Omega\cdot\Phi_s(z),\]
where $l$ is the order of $a$ at the origin, i.e.\ the smallest total degree in $x,y$ of a monomial occurring in $a$.

Define a connection on $\Omega$ by
\[
 \nabla_s(h)=\partial_z(h\cdot \Phi_s)/ \Phi_s=\partial_z(h)+h\sum_{i\in I}\frac{s_i}{z+a_i},\qquad h\in \Omega.
\]
This is a meromorphic connection on $\mathbb P^1_K$ with eight poles, namely the seven punctures and $\infty$. Let
\[\cH_s=\Omega\,dz\big/\bigl(\nabla_s\Omega\bigr)dz \]
be the first cohomology of its de Rham complex, and for $h\in \Omega$ let $[h\,dz]$ denote the class of $h\,dz$ in $\cH_s$. 
We claim that
\begin{equation}\label{eq:basis}
[dz],\ [z\,dz],\ \dots,\ [z^5\,dz]
\end{equation}
form a $K$-basis of $\cH_s$. A pole of order $k\geq2$ of a rational one-form at $-a_i$ can be removed by subtracting a multiple of $\nabla_s\bigl((z+a_i)^{-k+1}\bigr)dz$, whose coefficient at order $k$ is $s_i-k+1\neq0$; repeating this removes every pole of order at least two. If the remaining simple-pole coefficients are $r_i$, interpolation at the seven distinct punctures gives a polynomial $h$ of degree at most six with $h(-a_i)=r_i/s_i$, and subtracting $\nabla_s h\,dz$ removes all simple poles. Hence every class has a polynomial representative. Next, if $\nabla_s h$ is a polynomial for some $h\in\Omega$, then $h$ has no finite pole, since a pole of order $k$ of $h$ at $-a_i$ would produce a pole of order $k+1$ of $\nabla_s h$ with the nonzero coefficient $s_i-k$; thus $h$ is a polynomial, and the residues $s_ih(-a_i)$ of $\nabla_s h$ must vanish, so that $h=Pq$ for a polynomial $q$. Conversely,
\[
\nabla_s(Pq)=Pq'+(P'+P\sum_{i\in I}\frac{s_i}{z+a_i})q
\]
is a polynomial. If $q$ has degree $d$, this polynomial has degree $d+6$, and its leading coefficient relative to that of $q$ equals
\[
d+7+\sum_{i\in I}s_i=d-5-s_2,
\]
which is nonzero in $K$ for every $d\geq0$. Descending subtraction of exact forms therefore reduces every polynomial one-form to one of degree at most five, while no nonzero exact polynomial one-form has degree at most five. This proves \eqref{eq:basis}. The same leading-term argument shows that $\nabla_s$ has no nonzero kernel on $\Omega$. A more conceptual explanation is that, by the Riemann--Hilbert correspondence (over $K$), $\cH_s$ is the first cohomology of a rank-one local system on $\mathbb P^1$ minus eight points. Since its $H^0$ vanishes and its Euler characteristic is $2-8=-6$, $\dim_K\cH_s=6$. 

Define  
\begin{equation}\label{eq:rho}
\rho_0(m)=[\Res_t(t\hat m)\,dz]\in\cH_s ,
\end{equation}
where $\Res_t$ extracts the coefficient of $t^{-1}\Phi_s(z)$ in the formal Laurent series $t\hat m$, i.e.\ the residue along the exceptional divisor $t=0$ after $t$-adic completion. By the chain rule, we have
\begin{equation}\label{eq:pullback}
\widehat{\partial_xm}=t^{-1}\partial_z(\hat m),\qquad \widehat{\partial_ym}=\partial_t\hat m-\frac{z}{t}\partial_z\hat m .
\end{equation}
Then
\[
\rho_0(\partial_xm)=[\nabla_s\Res_t(\hat m)\,dz]=0. 
\]
Moreover, $\Res_t(t\partial_t \hat m)=-\Res_t(\hat m)$, and hence
\begin{equation}\label{eq:rhoy}
\rho_0(\partial_ym)=\bigl[\bigl(-\Res_t(\hat m)-z\nabla_s\Res_t(\hat m)\bigr)dz\bigr]=-\bigl[\nabla_s\bigl(z\Res_t(\hat m)\bigr)dz\bigr]=0,
\end{equation}
where the second equality follows from the Leibniz rule $\nabla_s(zg)=g+z\nabla_sg$.

Let $W$ be a $K$-vector space and let $\ell_s\colon\cH_s\to W$ be $K$-linear. In $\Delta_K$, put
\[
J_{ij}=\frac{(-1)^{i+j}}{i!\,j!}\,\partial_x^i\partial_y^j\delta_0 .
\]
With negative indices interpreted as zero, we have
\begin{equation}\label{eq:Jrelations}
xJ_{ij}=J_{i-1,j},\quad yJ_{ij}=J_{i,j-1},\quad \partial_xJ_{ij}=-(i+1)J_{i+1,j},\quad \partial_yJ_{ij}=-(j+1)J_{i,j+1}.
\end{equation}
Define
\begin{equation}\label{eq:Psi}
\Psi_{\ell_s}(m)=\sum_{i,j\geq0}J_{ij}\otimes\ell_s\bigl(\rho_0(x^iy^jm)\bigr)\in\Delta_K\otimes_KW .
\end{equation}
The sum is finite because $\rho_0(x^iy^jm)=0$ whenever $i+j-12+8d+l>-2$. 
Compatibility with multiplication by $x$ and $y$ follows from \eqref{eq:Jrelations} by shifting indices. For the derivatives, we have for instance
\[
0=\rho_0\bigl(\partial_x(x^iy^jm)\bigr)=i\rho_0(x^{i-1}y^jm)+\rho_0(x^iy^j\partial_xm),
\]
and this identity, its analogue for $y$, and \eqref{eq:Jrelations} give compatibility with $\partial_x$ and $\partial_y$. Hence $\Psi_{\ell_s}$ is $\sD_X[s]$-linear on $\cN$, and it induces a $\sD_X\otimes_\C K$-linear map on $N=\cN\otimes_RK$, still denoted by $\Psi_{\ell_s}$.

For $x^py^qe$, put $k=10-p-q$. If $k\geq0$, then
\begin{equation}\label{eq:sourcemoment}
\Psi_{\ell_s}(x^py^{q}e)=\sum_{i,j\ge0,\ i+j\le k}J_{ij}\otimes\sum_{a=0}^{k-i-j}\binom{s_7}{a}\binom{s_8}{k-i-j-a}\,\ell_s\Bigl(\Bigl[\frac{z^{p+i}}{(z+b)^a(z+c)^{k-i-j-a}}\,dz\Bigr]\Bigr),
\end{equation}
and it is zero for $k<0$. In particular,
\begin{equation}\label{eq:sourcebasis}
\Psi_{\ell_s}(x^iy^{10-i}e)=J_{00}\otimes\ell_s([z^i\,dz]),\qquad 0\leq i\leq10. 
\end{equation}
The first six classes in \eqref{eq:sourcebasis} form the basis \eqref{eq:basis}. Since $\Delta_K\otimes_KW$ is generated by $J_{00}\otimes W$ as a $\sD_X\otimes_\C K$-module, it follows that if $\ell_s$ is surjective, then so is
\[\Psi_{\ell_s}\colon N\to \Delta_K\otimes_K W.\]

Similarly, for $x^py^qfe$, put $k=2-p-q$. If $k\geq0$, then
\begin{equation}\label{eq:denmoment}
\Psi_{\ell_s}(x^py^{q}fe)=\sum_{i,j\ge0,\ i+j\le k}J_{ij}\otimes\sum_{a=0}^{k-i-j}\binom{\lambda_7}{a}\binom{\lambda_8}{k-i-j-a}\,\ell_s\Bigl(\Bigl[\frac{z^{p+i}P}{(z+b)^a(z+c)^{k-i-j-a}}\,dz\Bigr]\Bigr),
\end{equation}
and it is zero for $k<0$. Moreover, the subspace $V\subseteq\cH_s$ spanned by the classes $\rho_0(x^py^{q}fe)$, for all monomials $x^py^q$, is spanned by the classes of the six functions
\begin{equation}\label{eq:sixclasses}
P,\quad zP,\quad z^2P,\quad d(z)P,\quad z\,d(z)P,\quad e_2(z)P,
\end{equation}
where
\begin{equation}\label{eq:d}
d(z)=\frac{\lambda_7}{z+b}+\frac{\lambda_8}{z+c}
\end{equation}
and
\begin{equation}\label{eq:e2}
e_2(z)=\frac12\Bigl(d(z)^2-\frac{\lambda_7}{(z+b)^2}-\frac{\lambda_8}{(z+c)^2}\Bigr).
\end{equation}
More precisely, for $p+q=2$ the classes $\rho_0(x^py^{q}fe)$ are those of $P$, $zP$ and $z^2P$; for $p+q=1$ they are those of $d(z)P$ and $z\,d(z)P$; $\rho_0(fe)=[e_2(z)P\,dz]$; and $\rho_0(x^py^{q}fe)=0$ for all other $p,q$. These six classes need not be linearly independent; we now determine the relations among them.

Put
$D(z)=(z+b)(z+c)$.
Since $P$ is invertible in $\Omega$, every element of $\Omega$ can be written as $Ph$ with $h\in\Omega$. Hence a linear relation in $\cH_s$ among the six classes \eqref{eq:sixclasses} has the form
\[
A_2(z)P+B_1(z)d(z)P+Ce_2(z)P=\nabla_s(Ph)=\Bigl(h'+h\sum_{i\in I}\frac{\lambda_i}{z+a_i}\Bigr)P ,
\]
where $h\in\Omega$, $\deg A_2\leq2$, $\deg B_1\leq1$, and $C$ is a scalar. Equivalently, dividing by $P$,
\begin{equation}\label{eq:relation}
A_2(z)+B_1(z)d(z)+Ce_2(z)=\nabla_\lambda h,\qquad\text{where }\nabla_\lambda h=h'+h\sum_{i\in I}\frac{\lambda_i}{z+a_i}.
\end{equation}

At any finite puncture, a pole of order $k$ of $h$ gives a pole of order $k+1$ on the right of \eqref{eq:relation} with coefficient $\lambda_i-k$, which is nonzero in the parameter function fields. Since the left side is regular at $0,-1,-2,-3,-4$, the function $h$ has no poles there, and its value at each of these points must vanish because the corresponding $\lambda_i$ is nonzero. At $-b$ and $-c$, the left side has poles of order at most two, so $h$ has at most simple poles there. On $S=-12$,
\[
\sum_{i\in I}\lambda_i=-4-\lambda_2 .
\]
If the polynomial part of $h$ has degree $r\geq1$, then the polynomial part of $\nabla_\lambda h$ has degree $r-1$, and its leading coefficient is $r-4-\lambda_2$ times that of $h$. Since the polynomial part of the left side of \eqref{eq:relation} has degree at most two and $r-4-\lambda_2\neq0$, we get $r\leq3$. Therefore $D(z)h(z)$ is a polynomial of degree at most five, and it is divisible by $P_5$ because $h$ vanishes at the first five punctures. Thus
\begin{equation}\label{eq:hform}
h(z)=a_0\,\frac{P_5(z)}{(z+b)(z+c)}
\end{equation}
for a scalar $a_0$.

Let $h_b$ and $h_c$ be the residues of \eqref{eq:hform} at $-b$ and $-c$, so that
\begin{equation}\label{eq:residues}
h_b=a_0\,\frac{P_5(-b)}{c-b},\qquad h_c=a_0\,\frac{P_5(-c)}{b-c}.
\end{equation}
The coefficient of $(z+b)^{-2}$ on the right of \eqref{eq:relation} is $(\lambda_7-1)h_b$, while on the left it is $C\lambda_7(\lambda_7-1)/2$. Comparing these, and doing the same at $-c$, gives
\begin{equation}\label{eq:doublepole}
h_b=\frac{C\lambda_7}{2},\qquad h_c=\frac{C\lambda_8}{2},
\end{equation}
where we have cancelled the factors $\lambda_7-1$ and $\lambda_8-1$, which are nonzero in the function fields. A nonzero pair $(a_0,C)$ can therefore exist exactly when
\begin{equation}\label{eq:compat}
P_5(-b)\lambda_8+P_5(-c)\lambda_7=0 .
\end{equation}
For $c=5$ we have $P_5(-5)=(-5)(-4)(-3)(-2)(-1)=-120$ and for $b=\sqrt{2}$
\[
P_5(-b)=140-98b=-2(49\sqrt2-70)=-2\gamma .
\]
Thus the left side of \eqref{eq:compat} equals $-2(60\lambda_7+\gamma\lambda_8)=-2L$, and the double-pole compatibility is precisely the equation $L=0$.

Over $K_H$, the element $L$ is nonzero. Then \eqref{eq:residues} and \eqref{eq:doublepole} force $a_0=C=0$, hence $h=0$, and comparing simple-pole coefficients in the remaining identity $A_2+B_1d=0$ gives $B_1(-b)\lambda_7=B_1(-c)\lambda_8=0$. Hence $B_1$ vanishes at two distinct points, so $B_1=0$, and then $A_2=0$. The six classes \eqref{eq:sixclasses} are therefore linearly independent at the generic point of $H_{12}$.

Over $K$, the condition $L=0$ gives a one-dimensional space of solutions $(a_0,C)$ of \eqref{eq:residues} and \eqref{eq:doublepole}. For such a solution the double poles cancel, so the difference $\nabla_\lambda h-Ce_2$ has at most simple poles at $-b,-c$, no other finite poles, and a polynomial part of degree at most two. A unique linear polynomial $B_1$ matches the two simple-pole coefficients through $B_1d$, and a unique quadratic polynomial $A_2$ matches the remaining polynomial part. Hence a nontrivial relation exists over $K$, and \eqref{eq:hform}, together with the argument over $K_H$, shows that the space of relations is exactly one-dimensional. Therefore $\dim_KV=5$.

Put $W=\cH_s/V$, so that $\dim_KW=1$, and use the quotient map $\ell_s\colon\cH_s\to W$ in \eqref{eq:Psi}. Every moment $\rho_0(x^iy^jfe)$ lies in $V=\ker\ell_s$, so $\Psi_{\ell_s}(fe)=0$. Since $\Psi_{\ell_s}$ is $\sD_X\otimes_\C K$-linear, it kills (the image of) $\cM\otimes_RK$. 
We thus obtain a surjection
\[
\cQ\otimes_RK\onto\Delta_K\otimes_KW.
\]
In particular, $\cQ\otimes_RK\neq0$. Since $B_{F_8}=\Ann_R(\cQ)$ and every element of $R\setminus\fp_8$ acts invertibly on $\cQ\otimes_RK$, we conclude that
\[
B_{F_8}\subseteq(S+12,L)=\fp_8 ,
\]
which proves $C_8\subseteq Z(B_{F_8})$.

Since $S+12$ and $L$ have linearly independent linear parts, $R/\fp_8$ is a polynomial ring in six variables; hence $\fp_8$ is prime and $C_8$ is an affine subspace of dimension six. Its normal space is spanned over $\C$ by
\[
v_1=(1,1,1,1,1,1,1,1)\qquad\text{and}\qquad v_2=(0,0,0,0,0,0,60,\gamma).
\]
A normal vector $\mu_1v_1+\mu_2v_2$ has rational coordinates only if $\mu_1\in \Q$ and $\mu_2=0$, since $\gamma\notin\Q$. Thus the rational vectors in the two-dimensional normal space form only the line $\Q v_1$. In particular, the normal space, and hence the direction of $C_8$, is not defined over $\Q$, and $H_{12}$ is the only rational affine hyperplane containing $C_8$.
\end{proof}

\begin{proposition}\label{prop:generation}
For the tuple $F_8$ of Proposition~\ref{prop:sixplane}, let $f_8=\prod_{i=1}^8(F_8)_i$ be the product of its entries, let $\beta=(-\tfrac32,\dots,-\tfrac32)\in\C^8$, and let
\[
\cU_\beta=\cO_{\C^2,0}[1/f_8]\cdot e_\beta
\]
be the analytic meromorphic connection germ at the origin, where $e_\beta=F_8^{\beta}$. Then
\[
\sD_{\C^2,0}\,(f_8e_\beta)=\cU_\beta .
\]
\end{proposition}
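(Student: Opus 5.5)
We need to prove that $f_8e_\beta$ generates the local meromorphic connection $\cU_\beta$ at the origin for $\beta=(-\tfrac32,\dots,-\tfrac32)$. Away from the origin the divisor $f_8=0$ is a union of smooth curves meeting pairwise transversally; in fact near a point $p\neq 0$ at most one component passes through $p$, because all eight curves meet only at $0$. Check this: the lines meet only at $0$, and each parabola meets a line or the other parabola only at $y=0$, hence at $x=0$.

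Near such a point $p$, in a local coordinate $u$ with the relevant $f_i=u$ and the other factors units, $\cU_\beta$ is $\cO[1/u]u^{-3/2}$. Since $-\tfrac12$ is not a negative integer, $u^{-1/2}$ generates. So the question is purely about the germ at $0$. Equivalently, the plan is to show that $0\notin$ the support of the local quotient $\cU_\beta/\sD(f_8e_\beta)$. This quotient is holonomic, supported at most at $0$, hence a direct sum of copies of $\Delta_0$.

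\textbf{Step 1: reduce to a vanishing of residue pairings.} I would use a duality/residue criterion mirroring the construction in Proposition~\ref{prop:sixplane}. A nonzero $\sD$-linear map $\cU_\beta/\sD(f_8e_\beta)\to\Delta_0$ corresponds, after blowing up the origin ($y=t$, $x=tz$), to a linear functional on the twisted de Rham cohomology $\cH_\beta$ of $\mathbb P^1$ minus eight points. This is the connection $\nabla_\beta$ with exponents $\beta_i$ at the seven finite punctures and the appropriate exponent at $\infty$. Exactly as in \eqref{eq:Psi}–\eqref{eq:denmoment}, such a functional must kill the span $V_\beta$ of the residue classes $\rho_0(x^py^qf_8e_\beta)$.

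The exceptional divisor of the blow-up has multiplicity $12$ in $f_8$, giving an exponent $12\beta_\bullet$-type. The strict transforms are then normal crossings: the two parabolas become tangent-free after one blow-up, since they meet the exceptional divisor at the distinct points $z=-b$ and $z=-c$. So the only possible obstruction is this exceptional-divisor residue. Conversely, I would argue, for instance via a Bernstein–Sato/$b$-function computation along $t$ after pushforward, that if $V_\beta=\cH_\beta$ and the exponent along $t=0$ is not a "bad" integer, then $\sD(f_8e_\beta)$ contains $e_\beta\cdot$(everything). Concretely, the exponent along $t$ for $f_8e_\beta$ is $12-12\cdot\tfrac32+\dots$; I would check that it avoids the integers where the local $b$-function along the exceptional divisor forces non-generation.

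\textbf{Step 2: compute $V_\beta$.} Specialize the computation in the proof of Proposition~\ref{prop:sixplane} to $\lambda_i=-\tfrac12$ for all $i$. Then $\dim\cH_\beta=6$ still holds, since no exponent is an integer and the exponent at $\infty$ is nonintegral. The six classes \eqref{eq:sixclasses} are independent exactly when the compatibility condition \eqref{eq:compat} fails, together with the other nondegeneracy conditions used there ($\lambda_i-k\neq0$ and $r-4-\lambda_2\neq0$). At $\lambda=-\tfrac12$ these conditions hold as half-integers. Condition \eqref{eq:compat} becomes $-\tfrac12(P_5(-b)+P_5(-c))=-\tfrac12(140-98\sqrt2-120)\neq0$. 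Hence $V_\beta=\cH_\beta$, and no nonzero map to $\Delta_0$ exists.

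The main obstacle is Step 1: rigorously passing from "every morphism to $\Delta_0$ vanishes" at this specific $\beta$ to generation. This requires that the residue construction detect all of $\Hom(\cU_\beta/\sD f_8e_\beta,\Delta_0)$, i.e.\ that higher-order terms in $t$ do not contribute further obstructions. I would handle this by working on the blow-up $\pi$ and using that $\pi_+$ of the $\sD$-module generated on the blow-up (where the divisor is normal crossing and generation by $f_8e_\beta$ holds because all exponents are non-integral) surjects onto $\cU_\beta$. Its $H^0$ image is generated by $f_8e_\beta$ precisely when the cokernel, computed by $H^1$ of the twisted de Rham complex modulo $V_\beta$, vanishes.
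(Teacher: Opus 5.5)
Your reduction matches the paper: the quotient $\cU_\beta/\sD_{\C^2,0}(f_8e_\beta)$ is supported at the origin, so by Kashiwara's equivalence it suffices to show it admits no nonzero morphism to $\Delta_0$. Your Step~2 also matches: at $\lambda_i=-\tfrac12$ the six classes in \eqref{eq:sixclasses} are independent, so $V_\beta=\cH_\beta$.

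The genuine gap is Step~1, which you flag yourself. Nothing you wrote shows that every $\sD$-linear map $\cU_\beta\to\Delta_0$ is a residue functional $\Psi_\ell$ with $\ell\in\cH_\beta^*$. Step~2 only shows that $\ell\mapsto\Psi_\ell$ is injective, even after restricting to $\sD(f_8e_\beta)$. Without the upper bound $\dim_\C\Hom_{\sD}(\cU_\beta,\Delta_0)\le 6$, there could be other morphisms $\cU_\beta\to\Delta_0$, not of residue type, that kill $f_8e_\beta$.

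The paper gets this bound topologically:
\begin{itemize}
\item By Riemann--Hilbert and the adjunction between $i^{-1}$ and $Ri_*$, one has $\Hom_{\sD}(\cU_\beta,\Delta_0)\simeq H^2(B_\epsilon\setminus\{f_8=0\},L_\beta)^*$.
\item Isotoping the two parabolas to their tangent lines identifies the punctured-ball complement with the Hopf-link complement $E\times S^1$, where $E=\C\setminus\{0,-1,-2,-3,-4,-b,-c\}$.
\item The monodromy around the $S^1$-factor is $\exp(-2\pi i\sum_i\beta_i)=1$, so by K\"unneth the dimension is $\dim H^1(E,L_\beta)=6$.
\end{itemize}
Injectivity plus this count makes $\ell\mapsto\Psi_\ell$ an isomorphism. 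Only then does $V_\beta=\cH_\beta$ finish the proof.

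Your blow-up substitute does not close this gap.
\begin{itemize}
\item It has numerical errors. $f_8$ has multiplicity $8$, not $12$, along the exceptional curve; the $12$ is $-\sum_i\beta_i$, the order of $e_\beta$.
\item The pullback $\pi^*(f_8e_\beta)$ has exponent $8\cdot(-\tfrac12)=-4$ along $t=0$, which is an integer. So generation upstairs holds because $-4$ is a negative integer, not because ``all exponents are non-integral''. Integrality along $t=0$ is exactly what makes a residue there possible at all (cf.\ Remark~\ref{rmk:mainrmk}(2)).
\item More importantly, $\pi_+$ does not carry ``the localized module $\widetilde{\cU}$ on the blow-up is generated by $\pi^*(f_8e_\beta)$'' to ``$\cU_\beta$ is generated by $f_8e_\beta$''. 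Your last sentence, that the cokernel is computed by $\cH_\beta/V_\beta$, restates the missing claim rather than proving it.
\end{itemize}
To make that route work, you would have to compute $\Hom_{\sD}(\pi_+\widetilde{\cU},\Delta_0)$ by adjunction and proper base change to the exceptional curve. You would then need to check that the resulting identification with $\cH_\beta^*$ is the one given by $\Psi$. That is essentially the paper's dimension count in $\sD$-module language.

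A minor slip: the parabola $x+by+y^2=0$ does meet the line $x+dy=0$ away from the origin, at $y=d-b$. This is harmless, since such points lie outside a small ball and the crossing there is transversal anyway.
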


\begin{proof}
Write $f=f_8$, $\cU=\cU_\beta$, and $M=\sD_{\C^2,0}(fe_\beta)$.
Clearly,
 $\cU$, $M$ and $\cU/M$ are regular holonomic $\sD_{\C^2,0}$-modules. We will show that $\cU/M$ is supported at the origin and admits no nonzero morphism to the point module $\Delta_0$. 

Keep $P_5(z)=z(z+1)(z+2)(z+3)(z+4)$, $P(z)=P_5(z)(z+b)(z+c)$, and put $\Omega_0=\C[z,1/P]$. With $\beta_i=-\tfrac32$ for $i\in I$, define as in the proof of Proposition~\ref{prop:sixplane}
\[
\nabla_\beta h=h'+h\sum_{i\in I}\frac{\beta_i}{z+a_i},\qquad \cH_\beta=\Omega_0\,dz\big/(\nabla_\beta\Omega_0)\,dz.
\]
This gives a meromorphic connection on $\mathbb P^1$ with eight poles. 
As in the proof of Proposition~\ref{prop:sixplane}, the classes
\begin{equation}\label{eq:basisbeta}
[dz],\ [z\,dz],\ \dots,\ [z^5\,dz]
\end{equation}
form a basis of $\cH_\beta$.

We use again the coordinates $x=tz$, $y=t$. As in the proof of Proposition~\ref{prop:sixplane}, for a holomorphic germ $a=a(x,y)$ and an integer $d\geq0$, the element $m=ae_\beta/f^{d}$ becomes, in the coordinates $(t,z)$,
\begin{equation}\label{eq:scalarbeta}
\hat m=a(tz,t)\,t^{-12-8d}P^{-d}\Bigl(1+\frac{t}{z+b}\Bigr)^{-3/2-d}\Bigl(1+\frac{t}{z+c}\Bigr)^{-3/2-d}\Phi_\beta(z),
\end{equation}
where $\Phi_\beta$ denotes $\Phi_s$ with $s$ replaced by $\beta$. 
Define
\[
\rho_0(m)=[\Res_t(t\hat m)\,dz]\in\cH_\beta,
\]
where $\hat m$ is expanded as a Laurent series in $t$ using the Taylor series of $a(x,y)$, $(1+\frac{t}{z+b})^{-3/2-d}$ and $(1+\frac{t}{z+c})^{-3/2-d}$.

A homogeneous summand of total degree $n$ in the Taylor series of $a(x,y)$ contributes to $\rho_0(x^iy^jm)$ only if $i+j+n+k=10+8d$ for some $k\geq0$, so every residue depends on only finitely many Taylor coefficients. The chain rule \eqref{eq:pullback}, together with $\Res_t(t\partial_t\hat m)=-\Res_t\hat m$, gives $\rho_0(\partial_xm)=\rho_0(\partial_ym)=0$ as in \eqref{eq:rhoy}. For $\ell\in\cH_\beta^*$, define
\begin{equation}\label{eq:Psibeta}
\Psi_\ell(m)=\sum_{i,j\geq0}\ell\bigl(\rho_0(x^iy^jm)\bigr)J_{ij}\in\Delta_0 .
\end{equation}
As for \eqref{eq:Psi}, $\Psi_\ell(m)$ is a finite sum, and $\Psi_\ell\colon\cU\to\Delta_0$ is a homomorphism of analytic $\sD_{\C^2,0}$-modules.

For $m=fe_\beta$, the classes $\rho_0(x^py^qm)$ with $p+q\leq2$ are those of $P$, $zP$, $z^2P$, $d(z)P$, $z\,d(z)P$, $e_2(z)P$, where now $\lambda_7=\lambda_8=-\tfrac12$ in \eqref{eq:d} and \eqref{eq:e2}, and all other $\rho_0(x^py^qm)$ vanish. Since $\beta\notin C_8$, the argument over $K_H$ in the proof of Proposition~\ref{prop:sixplane} shows that these six classes are linearly independent, so they form a basis of $\cH_\beta$. 
Therefore
\[
\Psi_\ell(fe_\beta)=0\quad\Longrightarrow\quad\ell=0 .
\]
In other words, the map $\cH_\beta^*\to\Hom_{\sD}(\cU,\Delta_0)$, $\ell\mapsto\Psi_\ell$, is injective, so its image is six-dimensional, and restriction to $M$ is injective on this image.


Let $B_\epsilon$ be a sufficiently small ball around the origin, let
\[
j\colon B_\epsilon\setminus\{f=0\}\longrightarrow B_\epsilon,\qquad i\colon\{0\}\longrightarrow B_\epsilon ,
\]
and let $L_\beta$ be the rank-one local system of horizontal sections of $\cU$ on the complement. The regular Riemann--Hilbert equivalence \cite[Theorem\,7.2.1]{HTT08} gives $\DR(\cU)=Rj_*L_\beta[2]$ and $\DR(\Delta_0)=\C_0$. 
Since $i^{-1}$ is left adjoint to $Ri_*$ \cite[Proposition\,C.2.4]{HTT08}, we obtain
\begin{equation}\label{eq:homtop}
\Hom_{\sD}(\cU,\Delta_0)\simeq\Hom_{D^b(\C)}\bigl(i^{-1}Rj_*L_\beta[2],\C\bigr)\simeq H^2\bigl(B_\epsilon\setminus\{f=0\},L_\beta\bigr)^* .
\end{equation}
The second isomorphism uses $i^{-1}Rj_*L_\beta\simeq R\Gamma(B_\epsilon\setminus\{f=0\},L_\beta)$, together with the facts that complexes of vector spaces split into their cohomology and that vector spaces have no higher extension groups. 

We now identify the local complement. Deform the two quadratic factors $x+by+y^2$ and $x+cy+y^2$ through the families of curves
\[
x+by+\tau y^2=0,\qquad x+cy+\tau y^2=0,\qquad 0\leq\tau\leq1 .
\]
All eight tangent directions remain fixed and distinct. For $\tau>0$, the two curves meet a line $x+dy=0$ with $d\in\{0,1,2,3,4\}$ away from the origin only at $y=(d-b)/\tau$ and $y=(d-c)/\tau$, respectively, and the absolute values of these nonzero points have a positive lower bound independent of $\tau$. The two curves meet each other only at the origin, because their difference is $(b-c)y$, and each of them meets $y=0$ only at the origin. Hence a uniformly small ball contains no intersection point other than the origin throughout the deformation. Parametrize the first curve by $y=q$, $x=-bq-\tau q^2$ with $q=\rho e^{i\theta}$. Its squared norm is $\rho^2\bigl(1+|b+\tau\rho e^{i\theta}|^2\bigr)$, whose radial derivative is $2\rho(1+b^2)+O(\rho^2)>0$ for uniformly small positive $\rho$; the same computation applies with $c$ in place of $b$. Thus the two curves meet each sufficiently small sphere transversally throughout the deformation. The eight disjoint link circles vary smoothly with $\tau$, and thus, by the isotopy extension theorem, so does the complement of the link circles in the sphere. Therefore, by Milnor's local conic structure theorem, the punctured small-ball complement has the homotopy type of the complement of the eight Hopf circles determined by the tangent lines.

Since $y=0$ is among the deleted directions, this Hopf-circle complement is diffeomorphic to
\[
E\times S^1,\qquad E=\C\setminus\{0,-1,-2,-3,-4,-b,-c\};
\]
on a sphere the identification is $(x,y)=(tz,t)$ with $t=\epsilon e^{i\theta}/\sqrt{1+|z|^2}$. The local system $L_\beta$ has monodromy $-1$ around each oriented branch meridian. Its monodromy around the $S^1$-factor, which rotates all eight linear factors once, is
\[
\exp\Bigl(-2\pi i\sum_{i=1}^8\beta_i\Bigr)=1,
\]
since $\sum_i\beta_i=-12$.
It is then standard that $H^0(E,L_\beta)=0$ and hence, since $\chi(E)=-6$, that $\dim_\C H^1(E,L_\beta)=6$; by the K\"unneth formula, $\dim_\C H^2(E\times S^1,L_\beta)=\dim_\C H^1(E,L_\beta)=6$. 
By \eqref{eq:homtop},
\begin{equation}\label{eq:dimhom}
\dim_\C\Hom_{\sD}(\cU,\Delta_0)=6 .
\end{equation}

Shrink the ball so that every point of the divisor other than the origin lies on exactly one component, which is smooth there. Off the divisor, multiplication by $f$ is invertible. At a smooth point of the component $\{f_i=0\}$, the section $fe_\beta$ is a unit times $f_i^{-1/2}$, and since $-\tfrac12\notin\Z$, it generates $\cU$ there. Thus $\cU/M$ is supported at the origin.

By \eqref{eq:dimhom}, the injective map $\cH_\beta^*\to\Hom_{\sD}(\cU,\Delta_0)$ is also surjective, and thus the restriction map $\Hom_{\sD}(\cU,\Delta_0)\to \Hom_{\sD}(M,\Delta_0)$ is injective. Applying $\Hom_{\sD}(-,\Delta_0)$ to the short exact sequence $0\to M\to\cU\to\cU/M\to0$ therefore shows that $\Hom_{\sD}(\cU/M,\Delta_0)=0$. Since $\cU/M$ is holonomic and supported at the origin, Kashiwara's equivalence gives $\cU/M\simeq\Delta_0^{\oplus k}$ for some $k\geq0$; hence $\cU/M=0$, which proves $\sD_{\C^2,0}(fe_\beta)=\cU_\beta$.
\end{proof}

We can now prove Theorem~\ref{thm:main}.

\begin{proof}[Proof of Theorem~\ref{thm:main}]
By Proposition~\ref{prop:sixplane}, $C_8\subseteq Z(B_{F_8})$, and $C_8$ is irreducible of dimension six, i.e.\ of codimension two. Choose an irreducible component $Y$ of $Z(B_{F_8})$ containing $C_8$, and suppose that $C_8\subsetneq Y$. Since $B_{F_8}\neq0$, the component $Y$ then has codimension one, so by \cite[Theorem 1.5.1(ii)]{BvdVWZ21a} it is a rational affine hyperplane. By Proposition~\ref{prop:sixplane}, the only rational affine hyperplane containing $C_8$ is $H_{12}$, so $Y=H_{12}$.

Since analytification is faithfully exact (GAGA), Propositions~\ref{prop:generation} and~\ref{prop:relgen} show that $\beta\notin Z(B_{F_8,0})$; as $\beta\in H_{12}$, the hyperplane $H_{12}$ is not contained in $Z(B_{F_8,0})$. For a point $q\in \C^2$ other than the origin, by the choice of $b$ and $c$, the divisor of $f_8$ has simple normal crossings near $q$, so $Z(B_{F_8,q})$ is contained in the union of the hyperplanes $\{s_i=-1\}$; in particular, it does not contain $H_{12}$. Since $B_{F_8}$ is the intersection of the finitely many distinct local ideals $B_{F_8,q}$, $q\in \C^2$, we conclude that $H_{12}\not\subseteq Z(B_{F_8})$. This is a contradiction, and thus $C_8=Y$.
\end{proof}

\begin{remark}\label{rmk:mainrmk}
(1) The pair $(b,c)=(\sqrt{2},5)$ plays no special role. The only way $b$ and $c$
enter the proofs of Propositions~\ref{prop:sixplane} and~\ref{prop:generation} is through the two numbers $P_5(-b)$ and $P_5(-c)$. Fix
$b,c\in\mathbb{C}$,
and assume
\begin{enumerate}
\item[(i)] $b,c\notin\{0,1,2,3,4\}$ and $b\neq c$;
\item[(ii)] $P_5(-b)+P_5(-c)\neq 0$;
\item[(iii)] $P_5(-b)/P_5(-c)\notin\mathbb{Q}$.
\end{enumerate}
Then the same proofs show that
\[
C_8(b,c)=\bigl\{\,s\in\mathbb{C}^8\ \big|\ S_8+12=0,\quad
P_5(-c)(s_7+1)+P_5(-b)(s_8+1)=0\,\bigr\}
\]
is an irreducible component of codimension two of $Z(B_{F_8})$ (with $F_8$ defined using these $b,c$) whose
direction is not defined over $\mathbb{Q}$. 

(2) The constant $12$ is not arbitrary: it is forced by $r=8$ through the
following count. Since $\operatorname{Res}_t$ requires an integral radial
exponent, the construction can only be carried out on a hyperplane $\{S_8+l=0\}$ with
$l\in\mathbb{Z}$. On $\{S_8=-l\}$, the exponent of the radial parameter $t$ in the pullback of
$m=f_8F_8^{s}$ is $-l+8$, so the
scalar coefficient of the pullback of $x^py^qm$ is
$t^{\,p+q-l+8}z^p\sum_{k\ge0}E_k(z)t^k$, and $\rho_0$ extracts the coefficient
of $t^{-2}$. The surviving terms are exactly those with
\[
p+q+k=l-10 ,
\] 
so there are $\binom{l-8}{2}$ of them, while $\dim_K\cH_s=6$ because
$\cH_s$ is the first cohomology of a rank-one local system on
$\mathbb{P}^{1}$ minus $r=8$ points. Since $\{S_8+l=0\}$ must not be a component of $Z(B_{F_8})$, we have to require
\[\binom{l-8}{2}=\dim_K\cH_s=r-2,\]
which gives $l=12$. Moreover, one can construct further examples by adding linear factors to $F$ by the same method. However, one can check that making $r<8$ by deleting linear factors from $F$ is impossible since there is no relation among $\rho_0(x^py^qm)$, and thus no codimension $2$ component of $Z(F_r)$ can be produced for $r<8$. 
\end{remark}

\section{Proof of Theorem~\ref{thm:main2}}
Let $p=(-6/5,-6/5,-6/5)\in \C^3\simeq\Spec \C[s]$, and put
\[
u=s_1+6/5,\qquad v=s_2+6/5,\qquad w=s_3+6/5,
\]
and let $\mathfrak m=(u,v,w)$ be the corresponding maximal ideal.
The key claim is the following equality of ideals in the local ring $\mathbb C[u,v,w]_{\mathfrak m}$:
\begin{equation}\label{eq:keyclaim}
(B_F)_{\mathfrak m}
=\left(\bigcap_{i=1}^5P_i\right)_{\mathfrak m},
\end{equation}
where
\begin{equation}\label{eq:Bgen}
    \begin{aligned}
P_1&=(u+v+3w,w),& P_2&=(u+v+3w,v+2w),\\
P_3&=(u+2v+2w,w),& P_4&=(u+2v+2w,v+2w),\\
P_5&=(u+3v+w,v).
\end{aligned}
\end{equation}

Let $I=\bigcap_iP_i$. The lines defined by these primes have direction vectors
\[
d_1=(1,-1,0),\quad d_2=(1,2,-1),\quad d_3=(2,-1,0),\quad
d_4=(2,-2,1),\quad d_5=(1,0,-1).
\]
Any three of them are linearly independent. Thus a plane through the origin contains at most two of these lines.

The quadratic polynomial
\[
Q=u^2+3uv+2v^2+5uw+7vw+4w^2=(u+v+3w)(u+2v+2w)-w(v+2w)
\]
vanishes on all five lines. Hence $Q\in I$, and its order at $\mathfrak m$ is exactly $2$. Moreover, $Q$ is irreducible over $\mathbb C$, since the associated quadratic form has rank three. 

Suppose that $b\in B_F$ is a product of affine linear polynomials. By \eqref{eq:keyclaim}, $b\in(P_i)_\mathfrak m$ for each $i$. Since $P_i$ is prime and contained in $\mathfrak m$, contracting gives $b\in P_i$, and primality forces at least one linear factor of $b$ to lie in $P_i$. Such a factor vanishes on the entire corresponding line, and in particular at the origin of the $(u,v,w)$-coordinates.

The zero set of a linear factor vanishing at the origin is a plane through the origin, which contains at most two of the five lines. Therefore $b$ has at least three linear factors vanishing at the origin, so
$b\in\mathfrak m^3$.
If such polynomials generated $B_F$, their images would generate an ideal contained in $\mathfrak m^3\mathbb C[u,v,w]_{\mathfrak m}$. But by \eqref{eq:keyclaim} the localized ideal contains $Q$, which has order $2$, a contradiction. This proves Theorem~\ref{thm:main2}, assuming \eqref{eq:keyclaim}.


We now prove \eqref{eq:keyclaim}.
Let $g=(g_1,g_2)=(Z,X^5+Y^5+ZX^2Y^3)$ on $\C^3$ with coordinates $X,Y,Z$, and write $g^{(U,V)}=g_1^Ug_2^V$ with parameters $(U,V)$. Bahloul and Oaku \cite[Example 4 and Table 3]{BaOK} computed its global Bernstein--Sato ideal:
\[
B_g=h(U,V)\left(
(U+2)(U+3)(U+4)(U+5),\ (5V+7)(U+2),\ (5V+7)(5V+8)
\right),
\]
where
\[
h=(U+1)(V+1)^2(5V+2)(5V+3)(5V+4)(5V+6).
\]
Their computation also gives $B_{g,0}=B_g$, and \cite[Table 3]{BaOK} shows that all isolated components of this ideal lie over the origin: at every other spatial point, the zero locus of the local ideal is contained in the hyperplanes cut out by the factors of $h$.

Therefore, at $\alpha=(-2,-8/5)$ (by abuse of notation, we also write $\alpha$ for the corresponding maximal ideal of $\C[U,V]$),
\[
(B_{g,0})_\alpha=(U+2,5V+8)_\alpha,
\]
whereas $(B_{g,q})_\alpha=(1)$ for every spatial point $q\neq0$. 

For $m\in\mathbb N^2$, let
\[
B_g^m=\operatorname{Ann}_{\mathbb C[U,V]}
\left(\sD[U,V]g^{(U,V)}/\sD[U,V]g^{(U,V)+m}\right),
\]
where $\sD$ denotes the ring of algebraic differential operators on $\C^3$.
Set
$\beta=(-6,-18/5)$.
Using the third generator of $B_g$ and substitution, we know
\[\sD[U,V]_\beta\cdot g^{(U,V)}=\sD[U,V]_\beta \cdot g^{(U+2,V)}=\sD[U,V]_\beta\cdot g^{(U+2,V+2)}.\]
Then similarly
\[\sD[U,V]_\beta\cdot g^{(U,V)}=\sD[U,V]_\beta\cdot g^{(U+2,V)}=\sD[U,V]_\beta \cdot g^{(U+4,V+2)}\]
and thus
\begin{equation}\label{eq:BKO}
    (B_g^{(5,3)})_\beta=(U+6,5V+18)_\beta.
\end{equation}

For linearly independent vectors $a,b\in\mathbb N^3$ with $\sum_j a_j=5$ and $\sum_j b_j=3$, consider
\[
T=(g_1^{a_1}g_2^{b_1},g_1^{a_2}g_2^{b_2},g_1^{a_3}g_2^{b_3}).
\]
The substitution $U=a\cdot s,\ V=b\cdot s$ gives
\[
B_T=B_g^{(5,3)}\mathbb C[s].
\]

At $s=p$, the parameters are $U=-6$ and $V=-18/5$. Thus \eqref{eq:BKO} gives
\begin{equation}\label{eq:subBKO}
    (B_T)_p=(a\cdot(s-p),b\cdot(s-p))_p.
\end{equation}
All five pairs $(a_i,b_i)$ in the table are linearly independent, and for them \eqref{eq:subBKO} gives exactly the ideals $P_1,\dots,P_5$ in \eqref{eq:Bgen}. These assertions hold for analytic stalks as well.

Put $L_i=\{x=\xi_i,y=0\}$. At any point of $L_i$, every $G_j,H_j$ with $j\ne i$ is a unit, since
\[
G_j|_{L_i}=1,\qquad H_j|_{L_i}=(\xi_i-\xi_j)^5\ne0.
\]
Near $L_i$, the map
\[
(x,y,z)\longmapsto (X,Y,Z)=(x-\xi_i,y,G_i)
\]
is an analytic coordinate change: its Jacobian determinant is $A_i(x)$, equal to $1$ on $L_i$. In these coordinates, $(G_i,H_i)=g$. The preimage of the origin $X=Y=Z=0$ is
\[
q_i=(\xi_i,0,-c_i-1).
\]
Equation \eqref{eq:subBKO}, applied with $(a,b)=(a_i,b_i)$, shows that
\[
(B_{F,q_i})_\mathfrak m=(P_i)_\mathfrak m,
\]
and that at every other point of $L_i$ the localized local ideal is the unit ideal.

We next prove that the reduced divisor $\prod_i G_iH_i=0$ has simple normal crossings outside $\bigcup_i L_i$. This rules out additional singularities that could affect \eqref{eq:keyclaim}.

\subsection*{\texorpdfstring{On the planes $x=\xi_j$}{On the planes x = xi\_j}}

On $x=\xi_j$, for $i\ne j$, $G_i=1$, and, with $d=\xi_j-\xi_i\ne0$,
\[
H_i=d^5+y^5+d^2y^3=d^5P(y/d),\qquad P(t)=1+t^3+t^5.
\]
The polynomial $P$ has five distinct nonzero roots. Indeed, a common root of $P$ and $P'=t^2(3+5t^2)$ would satisfy $t^2=-3/5$ and $1-6t/25=0$, which are incompatible.

The root sets $(\xi_j-\xi_i)\cdot\operatorname{Roots}(P)$, for different $i\neq j$, are disjoint: an equality between two such roots would be a nontrivial linear relation with algebraic coefficients among the algebraically independent $\xi$'s. Thus at most one of these $H_i$ vanishes at a given point of the plane, and it has nonzero $y$-derivative there, while its $z$-derivative vanishes on the plane.

Moreover, $H_j=y^5$ on this plane, so it does not vanish outside $L_j$. The only other factor that may vanish is $G_j$, whose $z$-derivative is $1$ on the plane. If $G_j$ and some $H_i$ vanish at the same point, their gradients there are linearly independent. This proves the normal-crossing assertion on these planes away from $L_j$.

\subsection*{Off those planes}

Let $\Omega=\{x\ne\xi_i\text{ for all }i\}$, and write
\[
h_i=(x-\xi_i)^5+y^5,\qquad k_i=(x-\xi_i)^2y^3.
\]
On $\Omega$, each $A_i$ is nonzero, and a zero of $H_i$ has $y\neq0$, hence $k_i\neq0$. Consequently, the equations of the two divisors with index $i$ can be written as
\begin{equation}\label{eq:def11}
    \begin{array}{ll}
G_i=0:&c_i=-z-1/A_i,\\[2pt]
H_i=0:&c_i=-z-1/A_i-h_i/(A_ik_i).
\end{array}
\end{equation}
Selecting both divisors at index $i$ is equivalent to selecting $h_i=0$ together with the first equation in \eqref{eq:def11}.

Each locus $h_i=0$ is the union of the five planes $y=\zeta(x-\xi_i)$, $\zeta^5=-1$, whose common line $L_i$ does not meet $\Omega$. For different indices, equal slopes give disjoint parallel planes; unequal slopes meet transversely. No three planes from three different indices meet: concurrence would give a nontrivial algebraic-coefficient linear relation among the corresponding $\xi_i$'s. Thus for a set $J$ of doubled indices, the locus
\[
\mathcal H_J=\Omega\cap\bigcap_{i\in J}\{h_i=0\}
\]
is smooth of codimension $|J|$ if $|J|\le2$, and is empty if $|J|\ge3$.

Fix a selection of divisors, and let $I$ be its set of indices and $J\subseteq I$ its set of doubled indices. On the appropriate open subset of $\mathcal H_J$, the equations \eqref{eq:def11} define the fiber over $(c_i)_{i\in I}$ of a regular map to $\mathbb A^{|I|}$, defined over $\overline{\mathbb Q}(\xi_1,\ldots,\xi_5)$. By generic smoothness in characteristic zero, its fiber over this algebraically independent parameter tuple is smooth of dimension
\[
3-|J|-|I|,
\]
or is empty if the map is not dominant. Equivalently, all the selected divisor equations have independent differentials at every common zero. There are finitely many selections, and the chosen algebraically independent $c_i$'s are generic for all of them. This proves the normal-crossing assertion on $\Omega$.

At a simple normal crossing point, the tuple entries are analytic units times monomials in local coordinates. If a reduced component has exponent vector $e$, its contribution to the simultaneous-shift Bernstein--Sato generator is
\[
\prod_{k=1}^{\sum e_j}(e\cdot s+k).
\]
This is the elementary formula for monomials (see also \cite[Example 4.4]{Bud15}). In the present construction, $e$ is one of the $a_i$ or $b_i$, whose entries sum to $5$ or $3$, respectively. At $s=p$, these factors are $-6+k$, $1\le k\le5$, or $-18/5+k$, $1\le k\le3$. All these factors are nonzero, so the local ideal becomes the unit ideal after localizing at $p$.

Therefore, the only localized local ideals that are not the unit ideal are the five ideals $(P_i)_{\mathfrak m}$. The global Bernstein--Sato ideal is the intersection of the local ideals, and there are only finitely many distinct local ideals. Localization commutes with this finite intersection. This proves \eqref{eq:keyclaim}.

\bibliographystyle{amsalpha}
\bibliography{cexbib}

\end{document}